\DeclareMathAlphabet{\eusm}{OT1}{eusm}{m}{n}
\newtheorem{thm}{Theorem}[section]
\newtheorem{exam}[thm]{Example}
\newtheorem{rem}[thm]{Remark}
\newtheorem{lem}[thm]{Lemma}
\newcommand{\QQ}{\mathbb{Q}}
\newcommand{\ZZ}{\mathbb{Z}}
\newcommand{\NN}{\mathbb{N}}
\DeclareMathOperator{\Ker}{Ker}
\DeclareMathOperator{\image}{Im}
\newcommand{\ess}{\subseteq_e}
\newcommand{\coess}{\subseteq_c}
\newcommand{\graph}[1]{\langle #1 \rangle}
\def\vsp{\vspace{1ex}}
\newenvironment{proof}{\par\noindent{\bf Proof \,}}{$\hfill
\Box$\par\bigskip} \textheight 23cm
\begin{document}
\begin{center}
{\rm {\LARGE On lifting modules \\
which do not satisfy the finite internal exchange property}}
\end{center}
\vsp

\begin{center}
{\rm {\Large  Yoshiharu Shibata}}\\
Graduate School of Sciences and Technology for Innovation,
Yamaguchi University,\\
1677-1 Yoshida, Yamaguchi, 753-8512, Japan\\
e-mail: b003wb@yamaguchi-u.ac.jp
\end{center}

\vsp

\begin{center}
{\large {\bf Abstract}}
\end{center}

A module $M$ is said to be {\it lifting} if, 
for any submodule $N$ of $M$, 
there exists a direct summand $X$ of $M$ contained in $N$ 
such that $N/X$ is small in $M/X$. 
A module $M$ is said to satisfy 
the {\it finite internal exchange property} if, 
for any direct summand $X$ of $M$ and
any finite direct sum decomposition $M = \bigoplus_{i = 1}^n M_i$, 
there exists $M_i' \subseteq M_i$ $(i = 1, 2, \ldots, n)$ such that 
$M = X \oplus (\bigoplus_{i = 1}^n M_i')$. 
In this paper, we consider the open problem: 
does any lifting module satisfy 
the finite internal exchange property?
We give characterizations for
the square of a hollow and uniform module 
to be lifting, 
and solve the above problem negatively 
as an application of this result.

%It is characterized by ``the generalized projectivity'' 
%that the direct sum of lifting modules is lifting 
%under the condition of the finite internal exchange property. 
%However, no characterizations by some kind of projectivity have given
%if the finite internal exchange property is not assumed. 
%On the contrary, an example of a lifting module 
%without the finite internal exchange property is not found. 
%In this paper, we give a projectivity 
%for the square of a hollow and uniform module to be lifting, 
%and make an example of a lifting module without 
%the finite internal exchange property. 

\footnote[0]{$2010$ Mathematics Subject Classification{\rm :} 
Primary  16D40, ; Secondary 16D70. \\
\hspace{5mm}
Key Words{\rm:} lifting modules, extending modules, finite internal exchange property.}

%%%%%%%%%%%%%%%%%%% 1 %%%%%%%%%%%%%%%%%%%%%
\section{Preliminaries}

Throughout this paper, $R$ is a ring with identity and 
modules are unitary right $R$-modules. 
Let $M$ be a module and $N, K$ submodules of $M$ with $K \subseteq N$. 
$N$ is said to be {\it small} in $M$ (or a {\it small submodule} of $M$) 
if $N + X \neq M$ for any proper submodule $X$ of $M$
and we denote by $N \ll M$ in this case. 
A pair $(Q, f)$ of a module $Q$ and an epimorphism $f : Q \to M$ 
be a {\it small cover} of $M$ if $\Ker f \ll Q$. 
$K$ is said to be a {\it coessential submodule} of $N$ in $M$ if 
$N/K \ll M/K$ 
and we write $K \coess N$ in $M$ in this case. 

%$N$ is said to be a {\it coclosed submodule} of $M$ or {\it coclosed} in $M$, 
%if $N$ has no proper coessential submodule in $M$. 
%Let $\{ M_i \ | \ i \in I \}$ be a family of modules. 
%The direct sum decomposition $M = \bigoplus_{i \in I} M_i$ is said to be 
%{\it exchangeable} if, for any direct summand $X$ of $M$, 
%there exists $M_i' \subseteq M_i$ $(i \in I)$ such that 
%$M = X \oplus (\bigoplus_{i \in I} M_i')$. 
%A module $M$ is said to have the {\it finite internal exchange property} 
%(or briefly, {\it FIEP}) if 
%any finite direct sum decomposition $M = \bigoplus_{i = 1}^n M_i$ 
%is exchangeable. 
A module $M$ is said to satisfy the {\it finite internal exchange property} 
(or briefly, {\it FIEP}) if, for any direct summand $X$ of $M$ and
any finite direct sum decomposition $M = \bigoplus_{i = 1}^n M_i$, 
there exists $M_i' \subseteq M_i$ $(i = 1, 2, \ldots, n)$ such that 
$M = X \oplus (\bigoplus_{i = 1}^n M_i')$. 
Let $M = A \oplus B$ be a module 
and $h : A \to B$ a homomorphism. 
Then $\{a + h(a) \ | \ a \in A\}$ 
is called a {\it graph} of $h$ and denoted by $\graph{h}$. 
It is clear that $M = \graph{h} \oplus B$, 
$M = A + \graph{h}$ if $h$ is an epimorphism, and $A \cap \graph{h} = \Ker h$. 

A module $M$ is said to be {\it extending} (or {\it CS}) if, 
for any submodule $N$ of $M$, 
there exists a direct summand $X$ of $M$ 
such that $N$ is an essential submodule of $X$. 
An indecomposable extending module is called {\it uniform}. 
A lifting module is defined as a dual concept of an extending module, 
that is, a module $M$ is said to be {\it lifting} if, 
for any submodule $N$ of $M$, 
there exists a direct summand $X$ of $M$ 
such that $X$ is a coessential submodule of $N$ in $M$. 
An indecomposable lifting module is called {\it hollow}. 
It is well-known that uniform modules (hollow modules, resp.) 
are closed under nonzero submodules and essential extensions 
(nonzero factor modules and small covers, resp.). 
A module $M$ is said to be {\it uniserial} if 
its submodules are linearly ordered by inclusion. 
Clearly, any uniserial module is hollow and uniform. 
However the converse is not true. 
We consider 
\[
	R = \left(\begin{array}{@{\,}cccc@{\,}}
			K & K & K & K \\
			0 & K & 0 & K \\
			0 & 0 & K & K \\
			0 & 0 & 0 & K
	\end{array}\right), \quad
	M_R = (K, K, K, K)
\]
where $K$ is a field. 
Then $M$ has only $6$ submodules 
\[
	M, \ (0, K, K, K), \ (0, K, 0, K), \ (0, 0, K, K), \ (0, 0, 0, K), \ 0.
\]
Hence $M$ is hollow and uniform but not uniserial. 

Extending modules and lifting modules are important 
because they characterize right noetherian rings, 
right perfect rings, 
semiperfect rings, right (co-)H-rings and 
Nakayama rings (cf. \cite{BO}).

Let $A$ and $B$ be modules. 
$A$ is called {\it generalized $B$-injective} if, 
ffor any module $X$, 
any homomorphism $f : X \to A$ 
and any monomorphism $g : X \to B$, 
there exist direct sum decompositions 
$A = A_1 \oplus A_2$ and $B = B_1 \oplus B_2$, 
a homomorphism $h_1 : B_1 \to A_1$ and a monomorphism $h_2 : A_2 \to B_2$ 
such that $p_1f = h_1q_1g$ and $q_2g = h_2p_2f$, 
where $p_i : A = A_1 \oplus A_2 \to A_i$, 
$q_i : B = B_1 \oplus B_2 \to B_i$ $(i = 1, 2)$ are canonical projections (\cite{HKO}). 
$A$ is called {\it generalized $B$-projective} if, 
for any module $X$, 
any homomorphism $f : A \to X$ 
and any epimorphism $g : B \to X$, 
there exist direct sum decompositions 
$A = A_1 \oplus A_2$ and $B = B_1 \oplus B_2$, 
a homomorphism $h_1 : A_1 \to B_1$ and an epimorphism $h_2 : B_2 \to A_2$ 
such that $f|_{A_1} = gh_1$ and $g|_{B_2} = fh_2$. 
It is already known that, 
a finite direct sum of lifting modules (extending modules, resp.) 
 with the FIEP $M = \bigoplus_{i = 1}^n M_i$ is 
 lifting (extending, resp.) with the FIEP if and only if 
$K$ and $L$ are 
relative generalized projective
(relative generalized injective, resp.) 
for every $k = 1, 2, \ldots, n$, 
any direct summand $K$ of $M_k$ and 
any direct summand $L$ of $\bigoplus_{i \neq k} M_i$ 
by \cite[Theorem 3.7]{K1} (\cite[Theorem 2.15]{HKO}, resp.). 
%However there are no results of removing the condition ``FIEP''. 
%Also, an example of a lifting module 
%without the FIEP is not known yet. 

%In this paper, 
%we solve the problem: 
%does any lifting module satisfy the FIEP?, 
%and give characterizations  
%for the square of a hollow and uniform module 
%to be lifting (extending) 
%by the certain projectivity (an injectivity). 

In this paper, we consider the open problem: 
does any lifting module satisfy 
the finite internal exchange property?
By certain projectivities (injectivities), 
we give characterizations for
the square of a hollow and uniform module 
to be a lifting module (an extending module) 
which does not necessarily satisfy the FIEP.
Using this result, we give an example of a lifting module 
not satisfying the FIEP 
in order to solve the above problem negatively.

For undefined terminologies, the reader is referred to 
\cite{AF}, \cite{BO}, \cite{CLVW}, \cite{MM} and \cite{Wi}.

%%%%%%%%%%%%%%%%%%%%%%%%%%%%%%%%%%%%%%%%%%
\section{Main results}

\begin{lem}\label{hollowlemma}
	Let $A$ and $B$ be modules and put $M = A \oplus B$. 
    For any nonzero proper direct summand $X$ of $M$, the following holds:
	\begin{itemize}
    	\item[(1)]	If $A$ and $B$ are hollow, then so is $X$. 
        \item[(2)]	If $A$ and $B$ are uniform, then so is $X$. 
    \end{itemize}
\end{lem}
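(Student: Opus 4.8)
The plan is to treat (1) and (2) as formally dual statements and, in each case, to analyse $X$ through the two canonical projections $\pi_A\colon M\to A$ and $\pi_B\colon M\to B$ restricted to $X$. Writing $M = X\oplus Y$, note that since $X$ is nonzero and proper, $Y$ is nonzero and proper too. I would first record two elementary facts that drive both parts: a nonzero direct summand of $M$ is never small in $M$, and a proper direct summand of $M$ is never essential in $M$ (both follow immediately from $X\cap Y = 0$ with $Y\neq 0$). I would also use the standard bookkeeping that smallness (resp. essentiality) inside a summand is equivalent to smallness (resp. essentiality) in $M$, together with the fact that $K\ll A$ and $L\ll B$ imply $K\oplus L\ll A\oplus B$, and dually $K\ess A$ and $L\ess B$ imply $K\oplus L\ess A\oplus B$.

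For (1), I would observe that, by hollowness, each of $\pi_A(X)\subseteq A$ and $\pi_B(X)\subseteq B$ is either the whole component or small in it. If both were small, then $X\subseteq \pi_A(X)\oplus\pi_B(X)\ll M$, forcing the nonzero summand $X$ to be small, which is impossible; hence (say) $\pi_A|_X\colon X\to A$ is onto, with kernel $X\cap B$. Here $X\cap B = B$ would give $B\subseteq X$, and then surjectivity of $\pi_A|_X$ would force $A\subseteq X$, whence $X = M$, contradicting properness. So $X\cap B$ is a proper submodule of the hollow module $B$, thus small in $B$, hence small in $M$, hence small in the summand $X$. This exhibits $X$ as a small cover of $A\cong X/(X\cap B)$, and since hollow modules are closed under small covers, $X$ is hollow. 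For (2) I would run the dual argument: each of $X\cap A$ and $X\cap B$ is a submodule of a uniform module, hence either zero or essential in its component. If both were nonzero, then $(X\cap A)\oplus(X\cap B)$ would be essential in $A\oplus B = M$ and contained in $X$, forcing the proper summand $X$ to be essential in $M$, which is impossible. Hence (say) $X\cap A = 0$, so $\pi_B|_X\colon X\to B$ is injective and identifies $X$ with the nonzero submodule $\pi_B(X)$ of the uniform module $B$; since uniform modules are closed under nonzero submodules, $X$ is uniform.

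The main obstacle is simply to pin down the two dichotomies cleanly -- in (1) that at least one projection is surjective, and in (2) that at least one intersection with a component vanishes -- and to exclude the degenerate possibilities ($X\cap B = B$ in (1), and both intersections nonzero in (2)) using the properness and nontriviality of $X$. Once these are established, the stated closure properties of hollow and uniform modules (under small covers and under nonzero submodules, respectively) finish each case at once.
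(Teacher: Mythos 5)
Your proof is correct and follows essentially the same route as the paper's: in (1) you show a projection of $X$ onto one hollow component is surjective with small kernel $X\cap B$, so $X$ is a small cover of $A$ and hence hollow; in (2) you show $X$ meets one component trivially, embed it into the other uniform component, and invoke closure under nonzero submodules. The only difference is that you explicitly justify the two dichotomies (non-small implies a surjective projection; non-essential implies a trivial intersection) and the degenerate case $X\cap B=B$, which the paper states without proof.
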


\begin{proof}
	Let $p : M = A \oplus B \to A$ and $q : M = A \oplus B \to B$ 
    be canonical projections. 
    
	(1) Since $A$ and $B$ are hollow and $X$ is non-small, 
    	$X$ satisfies either $p(X) = A$ or $q(X) = B$. 
        Without loss of generality, we can take $X$ with $p(X) = A$. 
        By $X \neq M$, 
        we see $X \cap B \ll B$ because $B$ is hollow. 
        Since $X$ is a proper direct summand of $M$, 
        we obtain $\Ker p|_X = X \cap B \ll X$. 
        Hence $(X, p|_X)$ is a small cover of $A$. 
        Therefore $X$ is hollow. 
    
    (2) Since $A$ and $B$ are uniform and $X$ is non-essential, 
    	$X$ satisfies either $X \cap A = 0$ or $X \cap B = 0$. 
        Without loss of generality, we can take $X$ with $X \cap A = 0$. 
        Then $q|_X : X \to B$ is a nonzero monomorphism. 
        Therefore $X$ is uniform because 
        it is isomorphic to a submodule of a uniform module $B$. 
\end{proof}

Here we give a key lemma in this paper. 

\begin{lem}\label{graphlemma}
	Let $U$ be a hollow and uniform module
    and put $M = U^2$, $U_1 = U \times 0$ and $U_2 = 0 \times U$. 
    Then for any submodule $N_1$ of $U_1$ and 
    any epimorphism $h_1$ from $N_1$ to $U_2$, 
    $\langle h_1 \rangle$ is a direct summand of $M$. 
\end{lem}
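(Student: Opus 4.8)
The plan is to analyze $\graph{h_1}$ through the two canonical projections $p : M \to U_1$ and $q : M \to U_2$ and to read off a complement among $U_1, U_2$ in the easy cases, thereby reducing the whole statement to excluding one residual configuration. First I would record the identities that hold for \emph{every} epimorphism $h_1 : N_1 \to U_2$. Writing a typical element of $\graph{h_1}$ as $a + h_1(a)$ with $a \in N_1$, its $U_1$-component is $a$ and its $U_2$-component is $h_1(a)$; hence $\graph{h_1} \cap U_2 = 0$ and $\graph{h_1} \cap U_1 = \Ker h_1$. Since $h_1$ is onto, every $y \in U_2$ equals $h_1(a)$ for some $a \in N_1$, and then $x = (x-a) + a$ shows $(x,y) \in U_1 + \graph{h_1}$, so $\graph{h_1} + U_1 = M$. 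Dually $\graph{h_1} + U_2 = N_1 \oplus U_2$, which is all of $M$ exactly when $N_1 = U_1$.

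These identities already settle two cases. If $\Ker h_1 = 0$, then $\graph{h_1} \cap U_1 = 0$ together with $\graph{h_1} + U_1 = M$ gives $M = \graph{h_1} \oplus U_1$. If $N_1 = U_1$, then $\graph{h_1} \cap U_2 = 0$ together with $\graph{h_1} + U_2 = M$ gives $M = \graph{h_1} \oplus U_2$. In either case $\graph{h_1} \ds M$, so it remains only to treat the configuration in which $N_1$ is a proper submodule of $U_1$ \emph{and} $\Ker h_1 \neq 0$ simultaneously.

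The crux, and the step I expect to be the main obstacle, is to show that this residual configuration cannot occur for a hollow and uniform $U$; here both hypotheses on $U$ must be used at once. Since $U_1$ is hollow and $N_1 \subsetneq U_1$, the submodule $N_1$ is small in $U_1$, hence $N_1 \ll E$ for the injective hull $E := E(U_1)$; since $U_1$ is uniform and $\Ker h_1 \neq 0$, $\Ker h_1$ is essential in $N_1$, so the quotient $N_1/\Ker h_1 \cong U_2 \cong U$ is a \emph{singular} module. The plan is to exploit this tension. Composing $h_1$ with a fixed isomorphism $U_1 \xrightarrow{\sim} N_1/\Ker h_1$, the inclusion $N_1 \subseteq U_1$ yields a monomorphism $N_1 \hookrightarrow N_1/\Ker h_1$, so $N_1$ embeds into a singular module and is therefore itself singular. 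When $U$ is nonsingular this forces $N_1 = 0$, contradicting that $N_1$ maps onto $U_2 \neq 0$, and the residual case is excluded outright. The genuinely delicate point is the singular case, which I would attack by passing to $E$: extending $h_1$ (read through the identification $U_2 \cong U_1 \subseteq E$) along $N_1 \hookrightarrow E$ gives some $\hat h \in \End(E)$, and since $E$ is the injective hull of a uniform module it is indecomposable injective with $\End(E)$ local, so $\hat h$ is an automorphism or a non-unit. An automorphism forces $\Ker h_1 = 0$; to rule out the non-unit alternative I would combine the smallness $N_1 \ll E$ (which propagates to $\hat h(N_1) \ll E$) with the essentiality $\hat h(N_1) = U \subseteq_e E$ coming from uniformity, together with the strictly descending chain $N_1 \supsetneq h_1^{-1}(N_1) \supsetneq \cdots$ obtained by iterating $h_1$ and the fact that a hollow module has at most one maximal submodule. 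Once this residual configuration is eliminated, the two cases of the second paragraph are exhaustive and the lemma follows.
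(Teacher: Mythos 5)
Your handling of the two easy cases ($N_1 = U_1$ or $\Ker h_1 = 0$) agrees with the paper, but your plan for the remaining case --- $N_1 \subsetneq U_1$ together with $\Ker h_1 \neq 0$ --- rests on a false premise: this configuration \emph{does} occur for hollow and uniform modules, so no argument can exclude it. It is in fact precisely the case the lemma is designed for, and it is realized by the paper's closing Example. Concretely, if $U$ is uniserial with $\End(U)$ not local (as for $U = R/L$ there, following Facchini--Salce), then $U$ admits a non-surjective monomorphism $f : U \to U$ and a non-injective epimorphism $g : U \to U$; taking $N_1 = \image f \subsetneq U_1$ and $h_1 = g f^{-1} : N_1 \to U_2$ (composed with the swap isomorphism) gives an epimorphism from a proper submodule with $\Ker h_1 = f(\Ker g) \neq 0$. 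Your singularity computation is sound, but it only shows that $U$ must be singular in this case, which is no contradiction; and in your injective-hull step the extension $\hat h$ is simply a non-unit of the local ring $\End(E)$, so the contradiction you hope to extract from smallness, essentiality and descending chains of preimages cannot exist. Note also that in this case neither $U_1$ nor $U_2$ can serve as a complement, since $\graph{h_1} + U_2 = N_1 \oplus U_2 \subsetneq M$ and $\graph{h_1} \cap U_1 = \Ker h_1 \neq 0$: a genuinely new complement is required.

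The paper produces one explicitly, and this is the idea your proposal is missing. Let $N_2 \subseteq U_2$ be the image of $N_1$ under the swap isomorphism and $h_2 : N_2 \to U_1$ the mirrored epimorphism; the claim is $M = \graph{h_1} \oplus \graph{h_2}$. For the sum, hollowness enters: the composite $h_jh_i : h_i^{-1}(N_j) \to U_i$ is onto, while the inclusion $\iota_i : h_i^{-1}(N_j) \to U_i$ has image inside $N_i \ll U_i$, so $\iota_i - h_jh_i$ is onto; a direct computation then writes any $u_1 + u_2 \in M$ as an element of $\graph{h_1} + \graph{h_2}$. For the intersection, uniformity enters: $\graph{h_1} \cap \graph{h_2}$ meets $\Ker h_1 = \graph{h_1} \cap U_1$ trivially because $\graph{h_2} \cap U_1 = 0$, while $\Ker h_1 \neq 0$ is essential in the uniform module $\graph{h_1} \cong N_1$; hence $\graph{h_1} \cap \graph{h_2} = 0$. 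So rather than trying to kill the hard case, the correct move is to exhibit the graph of the mirrored map as an explicit complement.
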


\begin{proof}
    If $N_1 = U_1$ or $\Ker h_1 = 0$, 
    it is clear $M = \langle h_1 \rangle \oplus U_2$ 
    or $M = \langle h_1 \rangle \oplus U_1$. 
    We assume $N_1 \neq U_1$ and $\Ker h_1 \neq 0$, 
    and take a submodule $N_2$ of $U_2$ 
    which is a natural isomorphic image of $N_1$ and 
    an epimorphism $h_2$ from $N_2$ to $U_1$. 
    Now we prove $M = \langle h_1 \rangle \oplus \langle h_2 \rangle$. 
    
    First we show $M = \langle h_1 \rangle + \langle h_2 \rangle$. 
    Let $\iota_i : h_i^{-1}(N_j) \to U_i$ $(i \neq j)$ 
    be the inclusion mapping. 
    Then $\image \iota_i = h_i^{-1}(N_j) \subseteq h_i^{-1}(U_j) 
    = N_i \subsetneq U_i$
    $(i \neq j)$. 
    We define a homomorphism $h_i'$ from $h_i^{-1}(N_j)$ to $U_i$ 
    by $h_i'(x) = h_jh_i(x)$ for $x \in h_i^{-1}(N_j)$ $(i \neq j)$. 
    Then $h_i'$ is onto $(i = 1, 2)$. 
    Since $U_i$ is hollow, 
    we obtain that $\iota_i - h_i' : h_i^{-1}(N_j) \to U_i$ is onto
    $(i \neq j)$. 
    For any element $u_1 + u_2$ of $M$ $(u_i \in U_i)$, 
    there exists an element $x_i$ of $h_i^{-1}(N_j)$ such that 
    $(\iota_i - h_i')(x_i) = u_i$ $(i \neq j)$. 
    Hence 
    \begin{align*}
    	u_1 + u_2 	&= ((x_1 - h_2(x_2)) + h_1(x_1 - h_2(x_2))) 
        			+ ((x_2 - h_1(x_1)) + h_2(x_2 - h_1(x_1))) \\
	    &\in \langle h_1 \rangle + \langle h_2 \rangle. 
    \end{align*}
    Therefore  $M = \langle h_1 \rangle + \langle h_2 \rangle$. 
    
    Next we show $\langle h_1 \rangle \cap \langle h_2 \rangle = 0$. 
    We see 
    \[
    	(\langle h_1 \rangle \cap \langle h_2 \rangle) \cap \Ker h_1 
    	= (\langle h_1 \rangle \cap \langle h_2 \rangle) 
        		\cap (\langle h_1 \rangle \cap N_1) 
        \subseteq \langle h_2 \rangle \cap N_1 = 0. 
    \]
	Since $\langle h_1 \rangle \cong N_1$ is uniform and $\Ker h_1 \neq 0$, 
    we obtain $\langle h_1 \rangle \cap \langle h_2 \rangle = 0$. 
\end{proof}

The following is one of our main results. 
%Such projectivity is a generalization 
%of the generalized self-projectivity 
%limited to a hollow and uniform module. 

\begin{thm}\label{lifting}
	Let $U$ be a hollow and uniform module 
    and  put $M = U^2$, $U_1 = U \times 0$ and $U_2 = 0 \times U$. 
    Then the following conditions are equivalent: 
    \begin{itemize}
    	\item[(a)]	$M$ is lifting, 
    	\item[(b)]	for any module $X$, 
        			any homomorphism $f : U_1 \to X$ 
        			and any epimorphism $g : U_2 \to X$,                     one of the following holds: 
                    \begin{itemize}
				    	\item[(i)]	there exists a homomorphism 
                        			$h : U_1 \to U_2$ 
				        			such that $f = gh$, 
				        \item[(ii)]	there exist a submodule $N$ of $U_2$ 
                        			and an epimorphism $h : N \to U_1$ 
                                    such that $g|_N = fh$, 
   					\end{itemize}
        \item[(c)]	for any module $X$, 
        			any homomorphism $f : U_1 \to X$ 
        			and any epimorphism $g : U_2 \to X$, 
                    one of the following holds: 
                    \begin{itemize}
				    	\item[(i)]	there exists a homomorphism 
                        			$h : U_1 \to U_2$ 
				        			such that $f = gh$, 
				        \item[(ii)] there exist a submodule $K$ of $\Ker g$									 and a monomorphism $h : U_1 \to U_2/K$ 
                                     such that $g'h = f$, where 
                                     $g' : U_2/K \to X$ is defined by 
                                     $g'(\overline{u}) = g(u)$ 
                                     for $\overline{u} \in U_2/K$.
   					\end{itemize}
    \end{itemize}
\end{thm}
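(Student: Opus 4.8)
The plan is to establish the equivalences through the cycle (a)$\Rightarrow$(b)$\Rightarrow$(a) together with the formal equivalence (b)$\Leftrightarrow$(c). Throughout, let $p_1, p_2$ be the projections of $M = U_1 \oplus U_2$ onto $U_1, U_2$. I would lean constantly on Lemma \ref{hollowlemma}: every nonzero proper direct summand $Y$ of $M$ is hollow and uniform, so $M/Y$ is hollow. Consequently, whenever $Y \subseteq N \subsetneq M$ with $Y$ a nonzero direct summand, $N/Y$ is a proper, hence small, submodule of $M/Y$. This reduces the lifting property to the bare existence of a suitable nonzero direct summand inside each proper non-small submodule, and it is the observation that makes the whole argument go through.

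For (a)$\Rightarrow$(b), given $X$, a homomorphism $f : U_1 \to X$ and an epimorphism $g : U_2 \to X$ (the case $X = 0$ being settled by $h = 0$), I would feed the pullback submodule $N = \{(u_1, u_2) \in M : f(u_1) = g(u_2)\}$ into the lifting hypothesis. Since $g$ is onto, $p_1(N) = U_1$, so $N$ is non-small and lifting produces a nonzero proper direct summand $X_0 \subseteq N$, which is hollow and uniform. The crux is to show $p_1(X_0) = U_1$: if not, then $p_2(X_0) = U_2$ because $X_0$ is non-small, and reading $g(x_2) = f(x_1)$ off the defining relation of $N$ gives $X = \image g \subseteq f(p_1(X_0))$; but $p_1(X_0) \ll U_1$ would make $f(p_1(X_0))$ small in $\image f$, forcing $X \ll X$, i.e.\ $X = 0$, a contradiction. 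Once $p_1(X_0) = U_1$, the proper summand $X_0$ meets one of $U_1, U_2$ trivially: if $X_0 \cap U_2 = 0$ then $X_0 = \graph{h}$ for an $h : U_1 \to U_2$ and $X_0 \subseteq N$ reads $f = gh$, giving (i); if $X_0 \cap U_1 = 0$ then $p_2|_{X_0}$ is injective and $X_0 = \graph{h}$ for an epimorphism $h : N' \to U_1$ with $N' = p_2(X_0) \subseteq U_2$, and $X_0 \subseteq N$ reads $g|_{N'} = fh$, giving (ii).

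For (b)$\Rightarrow$(a), take an arbitrary $N \le M$. If $p_1(N) \neq U_1$ and $p_2(N) \neq U_2$, then $N$ lies in a sum of small submodules, so $N \ll M$ and $X = 0$ works; if $N = M$, take $X = M$. Otherwise $N$ is proper and non-small, hence one projection is onto, and applying the coordinate-swapping automorphism $\tau(x,y) = (y,x)$ of $U^2$ if necessary (it preserves liftability of a submodule) I may assume $p_1(N) = U_1$. Setting $S_2 = N \cap U_2 \neq U_2$, I would apply (b) with $X = U_2/S_2$, with $g$ the natural epimorphism $U_2 \to U_2/S_2$, and with $f = \alpha : U_1 \to U_2/S_2$ defined by $\alpha(u) = p_2(n) + S_2$ for any $n \in N$ with $p_1(n) = u$ (well defined since $\Ker p_1|_N = S_2$). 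In either outcome, the resulting $\graph{h}$ lies in $N$ — one checks this by writing the relevant element as $n$ plus a member of $S_2 \subseteq N$ — and is a nonzero proper direct summand of $M$ by Lemma \ref{graphlemma}; by the reduction in the first paragraph, $\graph{h}$ is the desired coessential summand.

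Finally, (b)$\Leftrightarrow$(c) is a direct dictionary between the two forms of clause (ii), clause (i) being identical. From (b)(ii), with $N \subseteq U_2$ and an epimorphism $h : N \to U_1$ satisfying $g|_N = fh$, I would set $K = \Ker h$ (which lies in $\Ker g$ because $g|_N = fh$) and take the monomorphism $U_1 \cong N/K \hookrightarrow U_2/K$ inverse to the isomorphism induced by $h$; conversely, from (c)(ii) I would recover $N$ as the full preimage in $U_2$ of $\image h$ and compose the projection $N \to \image h$ with $h^{-1}$. I expect the genuine obstacle to be precisely the step $p_1(X_0) = U_1$ in (a)$\Rightarrow$(b): all the rest is bookkeeping once Lemma \ref{hollowlemma} and Lemma \ref{graphlemma} are available, but this is where the smallness inherent in the hollow and uniform hypotheses must be exploited, and it is exactly what forces the dichotomy between (i) and (ii) rather than a single uniform lifting.
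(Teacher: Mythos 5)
Your proposal is correct and follows essentially the same route as the paper: apply lifting to the pullback submodule $\{u_1+u_2 : f(u_1)=g(u_2)\}$ for (a)$\Rightarrow$(b), build a graph inside the given submodule via condition (b) and conclude with Lemmas \ref{hollowlemma} and \ref{graphlemma} for (b)$\Rightarrow$(a), and use the same kernel/image dictionary for (b)$\Leftrightarrow$(c). The only local difference is that you prove $p_1(X_0)=U_1$ by a smallness contradiction (if $p_2(X_0)=U_2$ then $X=\image g\subseteq f(p_1(X_0))\ll X$), whereas the paper reads it off directly from coessentiality ($A\coess\Ker\varphi$ in $M$ gives $M=A+U_2$); both arguments are valid.
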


\begin{proof}
    Let $p_i : M = U_1 \oplus U_2 \to U_i$ 
    be the canonical projection ($i = 1, 2$). 
	
    (a) $\Rightarrow$ (b): 
    Let $f : U_1 \to X$ be a nonzero homomorphism and 
    $g : U_2 \to X$ an epimorphism. 
    We define a homomorphism $\varphi : M \to X$ 
    by $\varphi(u_1 + u_2) = f(u_1) - g(u_2)$ 
    for $u_i \in U_i$ $(i = 1, 2)$. 
    Since $M$ is lifting, there exists a direct summand $A$ of $M$ 
    such that $A \coess \Ker \varphi$ in $M$. 
    Then $M = \Ker \varphi + U_2 = A + U_2$ because $g$ is onto. 
    So $p_1(A) = U_1$. 
    
    If $A \cap U_2 = 0$, 
    we can define a homomorphism $h : U_1 = p_1(A) \to U_2$ 
    by $h(p_1(a)) = p_2(a)$ for $a \in A$, and 
    $h$ satisfies $f = gh$. 
    Therefore (i) holds. 
    
    Otherwise we see $A \cap U_1 = 0$ since $U$ is uniform. 
    Hence we can define an epimorphism 
    $h : p_2(A) \to p_1(A) = U_1$ by $h(p_2(a)) = p_1(a)$ for $a \in A$, 
    and $h$ satisfies $g|_{p_2(A)} = fh$. 
    Therefore (ii) holds.

    (b) $\Rightarrow$ (a): 
	Let $X$ be a submodule of $M$. 
    We may assume that $X$ is a proper non-small submodule of $M$. 
    Since $U_1$ and $U_2$ are hollow with $U_1 \cong U_2$, 
    we only consider the case $p_1(X) = U_1$. 
    Then $M = X + U_2$. 
    Let $\pi : M \to M/X$ be the natural epimorphism. 
    Since $\pi|_{U_2}$ is onto, 
	one of the following (i) or (ii) holds: 
    \begin{itemize}
    	\item[(i)]	there exists a homomorphism $h : U_1 \to U_2$ 
        				such that $\pi|_{U_1} = \pi|_{U_2}h$, 
        \item[(ii)]	there exist a submodule $N$ of $U_2$ and 
        				an epimorphism $h : N \to U_1$ such that 
                        $\pi|_N = \pi|_{U_1}h$. 
    \end{itemize}
    In either case, we see
    $\graph{-h}$ is a direct summand of $M$ by Lemma \ref{graphlemma}, 
    and $\graph{-h} \subseteq X$ by the commutativity of the diagram. 
    Put $M = \graph{-h} \oplus T$ using a direct summand $T$ of $M$. 
    Since $T$ is hollow by Lemma \ref{hollowlemma}, 
    we obtain $T \cap X \ll T$. 
    Hence $\graph{-h} \coess X$ in $M$. 
    Therefore $M$ is lifting.

    (b) $\Rightarrow$ (c): 
    It is enough to show (b)(ii) $\Rightarrow$ (c)(ii). 
    For any homomorphism $f : U_1 \to X$ and 
    any epimorphism $g : U_2 \to X$, 
    we assume that 
    there exist a submodule $N$ of $U_2$ and 
    an epimorphism $h : N \to U_1$ 
    such that $g|_N = fh$. 
    Then $\Ker h \subseteq \Ker g$, 
    hence we can define an epimorphism $g' : U_2/\Ker h \to X$ 
    by $g'(\overline{u}) = g(u)$ for $\overline{u} \in U_2/\Ker h$. 
    Let $\overline{h} : N/\Ker h \to U_1$ be the natural isomorphism 
    and $\iota : N/\Ker h \to U_2/\Ker h$ the inclusion mapping, 
    and put $h' = \iota \overline{h}{}^{-1}$. 
    Clearly, $h'$ is a monomorphism and $g'h' = f$.

    (c) $\Rightarrow$ (b): 
    We show (c)(ii) $\Rightarrow$ (b)(ii). 
    For any homomorphism $f : U_1 \to X$ and 
    any epimorphism $g : U_2 \to X$, 
    we assume that 
    there exist a submodule $K$ of $\Ker g$ 
    and a monomorphism $h : U_1 \to U_2/K$ 
    such that $f = g'h$, 
    where $g' : U_2/K \to X$ is defined 
    by $g'(\overline{u}) = g(u)$ for $\overline{u} \in U_2/\Ker h$. 
    We express $\image h = N/K$. 
    Let $\varphi : N/K \to U_1$ be the inverse map of $h$ and
    $\pi : N \to N/K$ the natural epimorphism, 
    and put $h' = \varphi \pi$. 
    Then $h'$ is onto and $g|_N = fh'$. 
\end{proof}

\begin{thm}\label{extending}
	Let $U$ be a uniform and hollow module 
    and put $M = U^2$, $U_1 = U \times 0$ and $U_2 = 0 \times U$. 
    Then the following conditions are equivalent: 
    \begin{itemize}
    	\item[(a)]	$M$ is extending, 
        \item[(b)] 	for any module $X$, 
        			any homomorphism $f : X \to U_2$ and 
        			any monomorphism $g : X \to U_1$, 
				    one of the following holds: 
				    \begin{itemize}
				    	\item[(i)]	there exists a homomorphism 
                        			$h : U_1 \to U_2$ such that $f = hg$, 
				        \item[(ii)]	there exist a submodule $K$ of $U_1$ 
                        			and a monomorphism $h : U_2 \to U_1/K$ 
                                    such that $hf = \pi g$, 
                                    where $\pi$ is the natural epimorphism 
                                    from $U_1$ to $U_1/K$, 
    				\end{itemize}
        \item[(c)]	for any module $X$, 
        			any homomorphism $f : X \to U_2$ and 
        			any monomorphism $g : X \to U_1$, 
				    one of the following holds: 
				    \begin{itemize}
				    	\item[(i)]	there exists a homomorphism 
                        			$h : U_1 \to U_2$ such that $f = hg$, 
				        \item[(ii)]	there exist a submodule $N$ of $U_1$ 
                        			containing $\image g$ and 
				        			an epimorphism $h : N \to U_2$ 
                                    such that $f = hg$. 
    				\end{itemize}
    \end{itemize}
\end{thm}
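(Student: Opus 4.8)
The plan is to treat Theorem \ref{extending} as the essential/injective dual of Theorem \ref{lifting}, establishing (a)$\Rightarrow$(b), the formal equivalence (b)$\Leftrightarrow$(c), and (c)$\Rightarrow$(a). Write $p_i : M = U_1 \oplus U_2 \to U_i$ for the canonical projections. I would dispatch (b)$\Leftrightarrow$(c) first, since it is purely formal and mirrors the corresponding step for lifting: from the epimorphism $h : N \to U_2$ of (c)(ii) I set $K = \Ker h$, pass to the isomorphism $\overline{h} : N/K \to U_2$, and compose $\overline{h}{}^{-1}$ with the inclusion $N/K \hookrightarrow U_1/K$ to obtain the monomorphism of (b)(ii); conversely, from (b)(ii) I write $\image h = N/K$ and take $h' = h^{-1}\pi|_N : N \to U_2$, reading off $\image g \subseteq N$ and $f = h'g$ from $hf = \pi g$. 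These are routine applications of the isomorphism theorems.

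For (a)$\Rightarrow$(b), fix $f : X \to U_2$ and a monomorphism $g : X \to U_1$ (I may assume $f \neq 0$, since otherwise (i) holds with $h = 0$). The dual of the kernel $\Ker\varphi$ used in the lifting proof is the image submodule $N = \{(g(x), f(x)) \mid x \in X\}$ of $M$. Because $g$ is monic, the map $x \mapsto (g(x), f(x))$ is monic and $N \cap U_2 = 0$. As $M$ is extending there is a direct summand $A$ with $N \ess A$; from $N \cap U_2 = 0$ and $N \ess A$ I deduce $A \cap U_2 = 0$, and (setting aside the trivial case $X = 0$) $A$ is a nonzero summand which is proper, since $A = M$ would contradict $N \cap U_2 = 0$ while $U_2 \neq 0$. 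Hence $A$ is hollow and uniform by Lemma \ref{hollowlemma}. The decisive point is the dichotomy inside the proof of Lemma \ref{hollowlemma}: a non-small summand $A$ of $U_1 \oplus U_2$ with $U_1, U_2$ hollow must satisfy $p_1(A) = U_1$ or $p_2(A) = U_2$. Since $A \cap U_2 = 0$, the map $p_1|_A$ is monic, so $A = \graph{\theta}$ for a homomorphism $\theta : L \to U_2$ with $L = p_1(A) \supseteq \image g$ and $f = \theta g$. If $p_1(A) = U_1$, then $\theta : U_1 \to U_2$ and $h = \theta$ yields (i); otherwise $p_2(A) = U_2$, so $\theta$ is onto, and with $K = \Ker\theta$ the monomorphism $h : U_2 \xrightarrow{\overline{\theta}{}^{-1}} L/K \hookrightarrow U_1/K$ satisfies $hf = \pi g$, giving (ii).

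For (c)$\Rightarrow$(a), I must produce, for an arbitrary submodule $Y$ of $M$, a direct summand containing $Y$ essentially. If $Y$ is essential in $M$ take $A = M$, and if $Y = 0$ take $A = 0$; otherwise $Y$ is non-essential, and since $U_1, U_2$ are uniform a submodule meeting both nontrivially would be essential in $M$, so $Y$ meets at most one of them and, using the symmetry $U_1 \cong U_2$, I may assume $Y \cap U_2 = 0$. Then $g = p_1|_Y : Y \to U_1$ is monic and $f = p_2|_Y : Y \to U_2$, and I apply (c) with $X = Y$. In case (i) the map $h : U_1 \to U_2$ gives $p_2(y) = h(p_1(y))$ for $y \in Y$, so $Y \subseteq \graph{h}$, a direct summand with $M = \graph{h} \oplus U_2$; in case (ii) the epimorphism $h : N \to U_2$ with $N \supseteq \image g$ and $f = hg$ again gives $Y \subseteq \graph{h}$, and now Lemma \ref{graphlemma} guarantees that $\graph{h}$ is a direct summand of $M$. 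In both cases $\graph{h}$ is isomorphic via $p_1$ to a submodule of $U_1$ (namely $U_1$ or $N$), hence uniform, so the nonzero submodule $Y$ is automatically essential in $\graph{h}$; thus $\graph{h}$ is the required summand and $M$ is extending.

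The main obstacle is the structural analysis of the essential closure $A$ in (a)$\Rightarrow$(b): one must simultaneously extract $A \cap U_2 = 0$ and invoke the hollow dichotomy $p_1(A) = U_1$ or $p_2(A) = U_2$, then read off the correct map $h$ in each branch. By contrast the converse is comparatively soft once Lemma \ref{graphlemma} is available, because uniformity of the graph reduces the essentiality requirement to the mere nonvanishing of $Y$. I expect the orientation bookkeeping in the definition of $N$ and in verifying $hf = \pi g$ to be the only other place demanding care.
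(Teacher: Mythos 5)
Your proposal is correct and follows essentially the same route as the paper's proof: the same embedded-image submodule $\{(g(x),f(x)) \mid x \in X\}$ and essential-closure/dichotomy analysis for the forward direction, Lemma \ref{graphlemma} plus uniformity of $\graph{h}$ for the converse, and the same isomorphism-theorem bookkeeping for (b)$\Leftrightarrow$(c). The only difference is cosmetic: you close the cycle as (a)$\Rightarrow$(b)$\Rightarrow$(c)$\Rightarrow$(a), where your (a)$\Rightarrow$(b) is exactly the paper's (a)$\Rightarrow$(c) argument composed with its (c)$\Rightarrow$(b) conversion.
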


\begin{proof}
    Let $p_i : M = U_1 \oplus U_2 \to U_i$ 
    be the canonical projection ($i = 1, 2$). 
    
    (a) $\Rightarrow$ (c): 
    Let $f : X \to U_2$ be a nonzero homomorphism and 
    $g : X \to U_1$ a monomorphism. 
    We define a homomorphism $\varphi : X \to M$ 
    by $\varphi(x) = g(x) + f(x)$ for $x \in X$. 
    Since $M$ is extending, 
    there exists a direct summand $A$ of $M$ 
    such that $\image \varphi \ess A$. 
    By $\image \varphi \cap U_2 = 0$, $A \cap U_2 = 0$. 
    
    If $p_1(A) = U_1$, 
    we can define a homomorphism $h : U_1 = p_1(A) \to U_2$ 
    by $h(p_1(a)) = p_2(a)$ for $a \in A$, 
    and $h$ satisfies $f = hg$. 
    Therefore (i) holds. 
    
    Otherwise, we see $p_2(A) = U_2$ since $U$ is hollow. 
    We see $\image g \subseteq p_1(A)$, and
    we can define an epimorphism 
    $h : p_1(A) \to p_2(A) = U_2$ by $h(p_1(a)) = p_2(a)$ for $a \in A$. 
    Then $h$ satisfies $f = hg$. 
    Therefore (ii) holds.

	(c) $\Rightarrow$ (a): 
    Let $X$ be a submodule of $M$. 
    We may assume that 
    $X$ is a nonzero non-essential submodule of $M$. 
    Since $U_1$ and $U_2$ are uniform with $U_1 \cong U_2$, , 
	we only consider the case $X \cap U_2 = 0$ because $U$ is uniform. 
    Since $p_1|_X$ is a monomorphism, 
	one of the following (i) or (ii) holds: 
    \begin{itemize}
    	\item[(i)]	there exists a homomorphism $h : U_1 \to U_2$ 
        				such that $p_2|_X = hp_1|_X$. 
        \item[(ii)]	there exist a submodule $N$ of $U_1$ 
				        containing $p_1(X)$ and 
        				an epimorphism $h : N \to U_2$ such that 
                        $p_2|_X = hp_1|_X$. 
    \end{itemize}
    In either case, 
    $\graph{h}$ is a direct summand of $M$ by Lemma \ref{graphlemma}, 
    and $X \subseteq \graph{h}$ by commutativity of the diagram. 
    Since $\graph{h}$ is uniform by Lemma \ref{hollowlemma}, 
    we obtain $X \ess \graph{h}$. 
    Therefore $M$ is extending.

	(c) $\Rightarrow$ (b): 
    It is enough to show (c)(ii) $\Rightarrow$ (b)(ii). 
    For any homomorphism $f : X \to U_2$ and 
    any monomorphism $g : X \to U_1$, 
    we assume that 
    there exist a submodule $N$ of $U_1$ containing $\image g$ and 
    an epimorphism $h : N \to U_2$ such that $f = hg$. 
    Let $\overline{h} : N/\Ker h \to U_2$ be the natural isomorphism 
    and $\iota : N/\Ker h \to U_1/\Ker h$ the inclusion mapping, 
    and put $h' = \iota \overline{h}{}^{-1}$. 
    Then $h'$ is a monomorphism and $h'f = \pi g$, where 
    $\pi : U_1 \to U_1/\Ker h$ is the natural epimorphism.

    (b) $\Rightarrow$ (c): 
    We show (b)(ii) $\Rightarrow$ (c)(ii). 
    For any homomorphism $f : X \to U_2$ and 
    any monomorphism $g : X \to U_1$, 
    we assume that there exist a submodule $K$ of $U_1$ 
    and a monomorphism $h : U_2 \to U_1/K$ 
    such that $hf = \pi g$, 
    where $\pi : U_1 \to U_1/K$ is the natural epimorphism. 
    We express $\image h = N/K$. 
    Let $\varphi : N/K \to U_2$ be the inverse map of $h$ 
    and $\eta : N \to N/K$ the natural epimorphism, 
    and put $h' = \varphi \eta$. 
    Then we see $\image g \subseteq N$, 
    $h'$ is an epimorphism and $f = h'g$. 
\end{proof}

\begin{rem}
	In Theorem \ref{extending}, 
    the assumption ``hollow'' cannot be removed. 
    In fact, $U_\ZZ = \ZZ$ is uniform and $U^2$ is extending. 
    However it does not hold neither (i) nor (ii) 
    in Theorem \ref{extending} (b) 
    for a homomorphism $f : 2\ZZ \to U$ defined by $f(2n) = 3n$ 
    and the inclusion mapping $g : 2\ZZ \to U$. 
\end{rem}

Lifting modules do not necessarily satisfy the FIEP. 
We can make an example of a lifting module without the FIEP, 
using Theorem \ref{lifting}. 

\begin{exam}
    Let $\ZZ_{(p)}$ and $\ZZ_{(q)}$ be the localizations of $\ZZ$ 
    at two distinct prime numbers $p$ and $q$ respectively. 
    We consider a semiperfect ring 
    $R = \left(\begin{array}{@{\,}cc@{\,}}
				\ZZ_{(p)} & \QQ \\
				0 & \ZZ_{(q)}
		\end{array}\right)$
    and its right ideal 
    $L = \left(\begin{array}{@{\,}cc@{\,}}
				0 & \ZZ_{(q)} \\
				0 & \ZZ_{(q)}
		\end{array}\right)$, 
    and put $U_R = R/L$. 
    Then $U$ is uniserial 
    whose the endomorphism ring is not local (see. \cite{FS}). 
    According to \cite[Proposition 12.10]{AF}, 
    $U^2$ does not satisfy the FIEP. 
    We show $U^2$ is lifting. 
    For any nonzero homomorphism $f : U \to U/X$ 
    where $X$ is a submodule of $U$, 
    we can take 
    \[
    	f(\overline{\left(\begin{array}{@{\,}cc@{\,}}
				1 & 0 \\
				0 & 0
			\end{array}\right)})
        = \overline{\left(\begin{array}{@{\,}cc@{\,}}
				x & 0 \\
				0 & 0
			\end{array}\right)} + X
        \quad (x \in \ZZ_{(p)})
    \]
    If $x \in \ZZ_{(q)}$, 
    we can define a homomorphism $h : U \to U$ with 
    $h(\overline{\left(\begin{array}{@{\,}cc@{\,}}
				1 & 0 \\
				0 & 0
			\end{array}\right)})
        = \overline{\left(\begin{array}{@{\,}cc@{\,}}
				x & 0 \\
				0 & 0
			\end{array}\right)}$, 
    and $h$ satisfies $\pi h = f$, 
    where $\pi$ is the natural epimorphism from $U$ to $U/X$. 
    Otherwise we can express $x = p^m \frac{1}{q^n} \frac{t}{s}$, 
    where $m \in \NN \cup \{0\}$, $n \in \NN$ and
    $s, t \in \ZZ \setminus (p\ZZ \cup q\ZZ$). 
    Put $N = \overline{\left(\begin{array}{@{\,}cc@{\,}}
				p^m & 0 \\
				0 & 0
			\end{array}\right)}R$. 
    We can define an epimorphism 
    $h : N =  \to U$
    with $h(\overline{\left(\begin{array}{@{\,}cc@{\,}}
				p^m & 0 \\
				0 & 0
			\end{array}\right)})
        = \overline{\left(\begin{array}{@{\,}cc@{\,}}
				q^n \frac{s}{t} & 0 \\
				0 & 0
			\end{array}\right)}$, 
    and $h$ satisfies $fh = \pi |_N$, 
    where $\pi$ is the natural epimorphism from $U$ to $U/X$. 
    Therefore $U^2$ is lifting by Theorem \ref{lifting}. 
\end{exam}

\section*{Acknowledgements}
The author would like to thank Professors Yosuke Kuratomi and
Isao Kikumasa for valuable comments.
%%The authors are also grateful to the referee for the suggestions which improved the presentations of the paper.
%This work was supported by JSPS KAKENHI Grant Number 15K04821.
%\appendix
%
%
%%%%%%%%%%%%%%%%%%%%%%%%%%%%%%%%%%%%%%%%%%%%%%%%%%%%%%%%%%%%%%%%%%%%%%
%

\end{document}